\numberwithin{equation}{section}
\theoremstyle{plain}
\newtheorem{Theorem}{Theorem}
\newtheorem{Corollary}[Theorem]{Corollary}
\newtheorem{Proposition}[Theorem]{Proposition}
\newtheorem{Lemma}[Theorem]{Lemma}
\theoremstyle{definition}
\newcommand{\Cat}{\ensuremath{\textnormal{Cat}}\xspace}
\newcommand{\Gpd}{\ensuremath{\textnormal{Gpd}}\xspace}
\newcommand{\Set}{\ensuremath{\textnormal{Set}}\xspace}
\newcommand{\f}[1]{\ensuremath{\mathcal{#1}}\xspace}
\newcommand{\g}[1]{\ensuremath{\mathbb{#1}}\xspace}
\newcommand{\Sset}{\ensuremath{\textnormal{SSet}}\xspace}
\newcommand{\TAlg}{\ensuremath{\textnormal{T-Alg}}\xspace}
\newcommand{\TAlgs}{\ensuremath{\textnormal{T-Alg}_{\textnormal{s}}}\xspace}
\newcommand{\dtwo}{\ensuremath{\Delta_{2}}\xspace}
\def\matrixobject@{%
 \edef \next@{={\DirectionfromtheDirection@ }}%
 \expandafter \toks@ \next@ \plainxy@
 \let\xy@@ix@=\xyq@@toksix@
 \xyFN@ \OBJECT@}
\let\xy@entry@@norm=\entry@@norm
\def\entry@@norm@patched{%
 \let\object@=\matrixobject@
 \xy@entry@@norm }
\newcommand{\twocong}[2][0.5]{\ar@{}[#2] \save ?(#1)*{\cong}\restore}
\newcommand{\twoeq}[2][0.5]{\ar@{}[#2] \save ?(#1)*{=}\restore}
\newcommand{\rtwocell}[3][0.5]{\ar@{}[#2] \ar@{=>}?(#1)+/l 0.2cm/;?(#1)+/r 0.2cm/^{#3}}
\newcommand{\ltwocell}[3][0.5]{\ar@{}[#2] \ar@{=>}?(#1)+/r 0.2cm/;?(#1)+/l 0.2cm/^{#3}}
\newcommand{\ltwocello}[3][0.5]{\ar@{}[#2] \ar@{=>}?(#1)+/r 0.2cm/;?(#1)+/l 0.2cm/_{#3}}
\newcommand{\dtwocell}[3][0.5]{\ar@{}[#2] \ar@{=>}?(#1)+/u  0.2cm/;?(#1)+/d 0.2cm/^{#3}}
\newcommand{\dltwocell}[3][0.5]{\ar@{}[#2] \ar@{=>}?(#1)+/ur  0.2cm/;?(#1)+/dl 0.2cm/^{#3}}
\newcommand{\drtwocell}[3][0.5]{\ar@{}[#2] \ar@{=>}?(#1)+/ul  0.2cm/;?(#1)+/dr 0.2cm/^{#3}}
\newcommand{\dthreecell}[3][0.5]{\ar@{}[#2] \ar@3{->}?(#1)+/u  0.2cm/;?(#1)+/d 0.2cm/^{#3}}
\newcommand{\utwocell}[3][0.5]{\ar@{}[#2] \ar@{=>}?(#1)+/d 0.2cm/;?(#1)+/u 0.2cm/_{#3}}
\newcommand{\dtwocelltarg}[3][0.5]{\ar@{}#2 \ar@{=>}?(#1)+/u  0.2cm/;?(#1)+/d 0.2cm/^{#3}}
\newcommand{\utwocelltarg}[3][0.5]{\ar@{}#2 \ar@{=>}?(#1)+/d  0.2cm/;?(#1)+/u 0.2cm/_{#3}}
\newcommand{\atwo}{\textbf{2}\xspace}
\title{A colimit decomposition for homotopy algebras in CAT}
\begin{document}
\author[John Bourke]{John Bourke}
\address{Department of Mathematics and Statistics, Masaryk University, Kotl\'a\v rsk\'a 2, Brno 60000, Czech Republic}
\email{bourkej@math.muni.cz}
\subjclass[2000]{Primary: 18D05, 55P99}

\date{\today}


\thanks{Supported by the Grant agency of the Czech Republic under the grant P201/12/G028.}

\begin{abstract}
Badzioch showed that in the category of simplicial sets each homotopy algebra of a Lawvere theory is weakly equivalent to a strict algebra.  In seeking to extend this result to other contexts Rosick{\'{y}} observed a key point to be that each homotopy colimit in \Sset admits a decomposition into a homotopy sifted colimit of finite coproducts, and asked the author whether a similar decomposition holds in the 2-category of categories \Cat.  Our purpose in the present paper is to show that this is the case.
\end{abstract}
 \leftmargini=2em

\def\xypic{\hbox{\rm\Xy-pic}}
\maketitle

\section{Introduction}
When $\f V$ is a complete and cocomplete symmetric monoidal closed category the theory of categories enriched in $\f V$ develops in much the same way as ordinary category theory.  Classical concepts, such as finite limit theories and their algebras, have enriched analogues: if $\g T$ is a small $\f V$-category with finite products one can consider $\g T$-algebras in $\f V$, which are $\f V$-functors $X:\g T \to \f V$ preserving finite products.

If $\f V$ has a notion of weak equivalence then algebras have a natural homotopy analogue: a \emph{homotopy algebra} being given by a $\f V$-functor $X:\g T \to \f V$ which preserves products up to weak equivalence, in the sense that the canonical map $X(A_{1} \times \ldots \times A_{n}) \to X(A_{1}) \times \ldots \times X(A_{n})$ is a weak equivalence for each finite tuple of objects of $\g T$.  

Each genuine or \emph{strict} algebra is a homotopy algebra and one can ask to what extent the converse is true---with respect to the natural pointwise notion of weak equivalence one can ask whether each homotopy algebra is weakly equivalent to a strict one.  Badzioch in \cite{Badzioch2002Algebraic} investigated this question in the case of simplicially enriched categories, with theories the classical single sorted Lawvere theories viewed as discrete simplicial categories; his main result 
a \emph{rigidification theorem} establishing each homotopy algebra to be weakly equivalent to a strict algebra.  This result was extended by Bergner in \cite{Bergner2005Rigidification} to cover finite product theories, again in the simplicial setting.

In \cite{Rosicky2012Rigidification} Rosick{\'{y}} has investigated the possibility of extending these rigidification results to other settings, by allowing his base of enrichment \f V to be a monoidal model category other than simplicial sets, and by considering weighted limit theories more general than finite product theories.  One of his rigidification results, Theorem 3.3 of \cite{Rosicky2012Rigidification}, requires that each cofibrant weight, or cofibrant object in $[\f J, \f V]$ with its projective model structure, admits a certain kind of colimit decomposition.  He asked the author whether such a colimit decomposition exists in the case that $\f V = \Cat$ with weak equivalences the equivalences of categories, and when the theories under consideration are just finite product theories---the special nature of the colimit decomposition now requiring that each cofibrant object of $[\f J,\Cat]$ can be presented as a sifted colimit of finite coproducts of representables, in which moreover each colimit involved is homotopically well behaved in a manner described in Section 4.2. 

The aim of the present paper is to show, in Theorem 8, that this is the case.  With this result in place Rosick{\'{y}}'s theorems' 3.3 and 5.1 of \cite{Rosicky2012Rigidification} yield rigidification results for homotopy algebras of finite product theories in \Cat---his Theorem 5.1 now asserts that, in \Cat, each homotopy algebra of a finite product theory is weakly equivalent to a strict algebra, a direct analogue of the results in the simplicial setting described above.

Now the cofibrant objects of $[\f J,\Cat]$ are the \emph{flexible} weights of \cite{Bird1989Flexible}.  Flexible limits and colimits have been the subject of much study in 2-category theory, and many of the results required to give the main decomposition, in Theorem~\ref{thm:Decomp2}, are known---our main contribution here is to put these facts together in an appropriate way.  Since the results are spread throughout the literature on 2-category theory, and some without detailed proof, we give a thorough, and reasonably self contained, treatment of all aspects involved in the decomposition, with the intention of making Rosick{\'{y}}'s rigidification result in this 2-categorical setting more easily accessible.

In Section 2 we give the necessary background, beginning with a few brief remarks on weighted limits and colimits.  We recall the notion of a flexible weight, and so flexible limits and colimits, describing the connection with model categories.  Examples of flexible colimits are given and their properties discussed.  We begin the third section by describing those flexible colimits involved in our decomposition of a flexible weight, giving a detailed treatment of reflexive codescent and reflexive lax codescent objects.  We show each of these colimits to be sifted colimits---the case of reflexive codescent objects is in \cite{Lack2002Codescent}.  Combining these results a presentation of each flexible weight as a sifted flexible colimit of coproducts of representables is given---this is the main novel result of the paper.  In the final section we begin by discussing bicolimits and their relationship with ordinary colimits.  We use that filtered colimits are bicolimits in \Cat \cite{Makkai1989Accessible} and reduce from arbitrary coproducts of representables to finite coproducts, giving the final decomposition in Theorem~\ref{thm:Decomp2}.

The author thanks Ji\v{r}\'{i} Rosick{\'{y}} and Stephen Lack for useful discussions on the content of this paper.

\section{Weighted colimits and flexible colimits}
\subsection{Weighted colimits}
When $\f V$ is a complete and cocomplete symmetric \\monoidal closed category one has the full theory of categories enriched in $\f V$ \cite{Kelly1982Basic}.  $\f V$ itself admits such an enrichment (which we will also denote by \f V) and for each small $\f V$-category \f J we have the enriched category $[\f J, \f V]$ whose objects, $\f V$-functors, are called \f J-indexed \emph{weights}.  
Given a diagram $D:\f J \to \f C$ its $W$-\emph{weighted limit} is an object $\{ W,D\}$ together with a $\f V$-natural transformation $W \to \f C(\{W,D\},D-)$ called a \emph{cone}, or cylinder, which induces an isomorphism $$\f C(A, \{W,D\}) \cong [\f J,\f V](W, \f C(A,D-))$$ for each $A \in \f C$.  Dually given a diagram $D:\f J^{op} \to \f C$ its $W$-weighted colimit, or just $W$-colimit to be brief, is an object $W \star D$ equipped with a \emph{cocone} $W \to \f C(D-,W\star D)$ inducing an isomorphism $$\f C(W \star D,A) \cong [\f J,\f V](W, \f C(D-,A))$$ for each $A \in \f C$.  

Amongst weighted colimits we find the familiar conical colimits such as coequalisers and coproducts, and also tensors by objects of $\f V$, but also more complex kinds---for instance when \f V = \Cat we have codescent objects, Kleisli objects of monads and many other useful colimits, some of which are described in detail in Section 3.

\subsection{Flexible colimits and cofibrancy}
A limit $\{W,D\}$ or colimit $W \star D$ is said to be flexible if $W$ is a \emph{flexible weight}.  One can approach flexible weights using model categories without knowing anything of 2-category theory beyond general enriched category theory, and likewise using 2-category theory without any model categories at all.  Both perspectives are important here so we recall each.

Flexible weights were first defined in 2-category theory, as a special case of the notion of flexible algebra for a 2-monad, and the results we describe now are special cases of results of \cite{Blackwell1989Two-dimensional} concerning 2-monads.  Whilst the generality of 2-monads is mostly beyond our needs at present a little background is required.  

Given a 2-category \f C we have the identity on objects inclusion $\iota:[\f J,\f C] \to Ps(\f J,\f C)$ with the latter 2-category having 2-functors as objects, arrows the more general pseudonatural transformations, and modifications for 2-cells.  The inclusion is the identity on objects so that we typically omit to label its action.  If \f C is both complete and cocomplete the inclusion has a left adjoint $Q$---we refer the reader to Section 3.2 for more detail on this.  The unit and counit at $W\in [\f J, \f C]$ are given by pseudonatural and 2-natural transformations $p_{W}:W \rightsquigarrow QW$ and $q_{W}:QW \to W$ respectively.  The isomorphism $$[\f J, \f C](QW,X) \cong Ps(\f J, \f C)(W,X)$$ exhibits $QW$ as a \emph{pseudomorphism classifier}, in the sense that any pseudonatural $W \rightsquigarrow X$ factors uniquely through $p_{W}:W \rightsquigarrow QW$ as a 2-natural transformation.  One of the triangle equations for the adjunction asserts that the pseudonatural $p_{W}:W \rightsquigarrow QW$ is a section of $q_{W}$ in $Ps(\f J, \f C)$; in fact $q_{W}$ is a retract equivalence, or \emph{surjective equivalence}, in the 2-category $Ps(\f J, \f C)$.  It follows in particular that the component of $q_{W}:QW \to W$ at each $j \in \f J$ is a surjective equivalence in \f C; thus $q_{W}$ is a \emph{pointwise surjective equivalence}.

 Now let us interpret the above in the special case of $[\f J, \Cat]$.  We have seen that the counit $q_{W}:QW \to W$ at a weight $W$ always admits a pseudonatural section $p_{W}$; the weight $W$ is said to be a \emph{flexible weight} \cite{Bird1989Flexible} just when $q_{W}$ admits a section in $[\f J,\Cat]$, in which case $q_{W}:QW \to W$ is in fact a surjective equivalence in $[\f J,\Cat]$.

On the other hand if we ignore its 2-dimensional structure then \Cat is a combinatorial model category with weak equivalences and fibrations the equivalences of categories and isofibrations, and trivial fibrations the surjective equivalences.  Cartesian product gives it the structure of a monoidal model category \cite{Hovey1999Model} so that one can speak of model 2-categories, \Cat being one of these.  It was shown in \cite{Lack2007Homotopy-Theoretic} that $[\f J,\Cat]$ obtains the projective model structure in which the weak equivalences and fibrations are pointwise as in \Cat, and that a \emph{flexible weight} is just a cofibrant object, or cofibrant weight, therein.  This was done in the more general context of 2-dimensional monad theory, but since the proof is short we include it in the case of weights.
\begin{Proposition}\textnormal{(Lack)}
When $[\f J,\Cat]$ is equipped with the projective model structure, the cofibrant objects therein are exactly the flexible weights.
\end{Proposition}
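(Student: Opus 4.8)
The plan is to establish the two inclusions separately, in each case using that the counit $q_W \colon QW \to W$ is a \emph{trivial fibration} for the projective model structure. This is immediate from what has already been recalled above: $q_W$ is a pointwise surjective equivalence, the surjective equivalences are precisely the trivial fibrations of $\Cat$, and since the projective fibrations and the projective weak equivalences are both detected pointwise, so too are the projective trivial fibrations.

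For the implication ``cofibrant $\Rightarrow$ flexible'', suppose $W$ is cofibrant. Then the map $\varnothing \to W$ out of the initial object is a cofibration, so it has the left lifting property against the trivial fibration $q_W$; lifting the identity $W \to W$ through $q_W$ yields a $2$-natural map $s \colon W \to QW$ with $q_W \circ s = 1_W$. Thus $q_W$ splits in $[\f J, \Cat]$, which is exactly the condition that $W$ be a flexible weight.

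For the converse I would first show that $QW$ is \emph{always} cofibrant, and then observe that when $W$ is flexible the section $s \colon W \to QW$ together with $q_W$ exhibits $W$ as a retract of $QW$ in $[\f J, \Cat]$, whence $W$ is cofibrant too, since cofibrant objects are closed under retracts. To see that $QW$ is cofibrant, take any trivial fibration $f \colon X \to Y$ and any map $g \colon QW \to Y$; a lift of $g$ through $f$ must be produced. The key point is that every trivial fibration $f$ of $[\f J, \Cat]$ admits a \emph{pseudonatural} section $s_f \colon Y \rightsquigarrow X$ with $f \circ s_f = 1_Y$ in $Ps(\f J, \Cat)$: each component $f_j$ is a surjective equivalence, hence surjective on objects and fully faithful, so one picks $(s_f)_j$ on objects among preimages, whereupon the action of $s_f$ on morphisms and its pseudonaturality constraints are forced by full faithfulness of the $f_j$, and the constraint axioms together with $f \circ s_f = 1_Y$ follow since each $f_j$ is faithful. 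Given such an $s_f$, the composite $s_f \circ g \circ p_W \colon W \rightsquigarrow X$ is pseudonatural, so by the universal property of the pseudomorphism classifier $QW$ it factors as $h \circ p_W$ for a unique $2$-natural $h \colon QW \to X$. Now $f \circ h$ and $g$ are $2$-natural maps $QW \to Y$ whose restrictions along $p_W$ both equal $g \circ p_W$ --- for $f \circ h$ because $f \circ s_f = 1_Y$ --- so $f \circ h = g$ by the uniqueness clause of the universal property; thus $h$ is the desired lift.

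The one step requiring genuine care is the construction of $s_f$: checking that the pointwise choices cohere into a bona fide pseudonatural transformation, and that $f \circ s_f$ is literally the identity of $Ps(\f J, \Cat)$, constraint cells included. Everything else is formal manipulation with the adjunction $Q \dashv \iota$ and with the two model-category generalities used above; the argument is in essence the ``weights only'' specialisation of the treatment of flexible $2$-algebras, following \cite{Lack2007Homotopy-Theoretic}.
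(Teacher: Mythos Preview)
Your argument is correct and follows essentially the same route as the paper's: both reduce everything to showing that $QW$ is cofibrant, and both do so by producing a pseudonatural section of an arbitrary trivial fibration and then invoking the universal property of $p_W$ to manufacture and verify the required lift. The only differences are cosmetic---you separate the two implications and spell out the retract-of-cofibrant step and the construction of the pseudonatural section in more detail, whereas the paper folds both directions into the single observation that once $QW$ is cofibrant, $W$ is cofibrant iff $q_W$ splits.
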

\begin{proof}
  In the projective model structure on $[\f J, \Cat]$ the trivial fibrations are pointwise as in \Cat, and so the pointwise surjective equivalences.  We have seen that $q_{W}:QW \to W$ is one of these; as such it will exhibit $QW$ as a cofibrant replacement of $W$ if we can show $QW$ to be cofibrant.  Upon doing so it is clear that $W$ will be cofibrant just when $q_{W}$ admits a section, which is to say when $W$ is a flexible weight.  To see that $QW$ is cofibrant suppose that $f:X \to Y$ is a pointwise surjective equivalence---given an arbitrary arrow $r:QW \to Y$ we should show it factors through $f$.  Observe that as $f$ is a pointwise surjective equivalence it admits a pseudonatural section $g:Y \rightsquigarrow X$ so that $fg=1$.  Now consider the following diagram

$$\xy
(0,0)*+{W}="00";(20,0)*+{QW}="10";(40,0)*+{Y}="20";(40,15)*+{X}="21";
{\ar@{~>}_{p_{W}} "00"; "10"}; 
{\ar_{r} "10"; "20"}; 
{\ar^{f} "21"; "20"}; 
{\ar@{~>}_{gr} "10"; "21"}; 
{\ar@/^1.5pc/^{h} "10"; "21"}; 
\endxy$$
where all but $h$ have been defined.  By the universal property of $p_{W}$ the composite $grp_{W}:W \rightsquigarrow X$ is uniquely of the form $hp_{W}$ for a 2-natural $h:QW \to X$.  Now to check that $fh=r$ it suffices, by the same universal property, to show $fhp_{W}=rp_{W}$  But we have $fhp_{W}=fgrp_{W}=rp_{W}$ as required.
\end{proof}

Knowing that the flexible weights are the cofibrant weights doesn't offer much insight as to what they actually look like, or why they are interesting in 2-category theory.  Let us conclude this section by briefly mentioning some examples of flexible colimits including the generating ones, and some other properties.  We will describe the examples of most importance in more detail in Section 3.

\subsection{Pseudocolimits}
Given a weight $W$ and diagram $D: \f J^{op} \to \f C$ its $W$ weighted \emph{pseudocolimit}  $W \star _{p} D$ is defined by an isomorphism $\f C(W \star_{p} D, A) \cong Ps(\f J,\Cat)(W, \f C(D-,A))$ natural in $A$.  By the adjunction $Q \dashv \iota$ we have a natural isomorphism $[\f J,\Cat](QW, \f C(D-,A)) \cong Ps(\f J,\Cat)(W, \f C(D-,A))$ so that the pseudocolimit is nothing but the weighted colimit $QW \star D$.  That pseudocolimits are flexible is easy to see: the adjunction $Q \dashv \iota$ generates a comonad $(Q,q,\Delta)$ on $[\f J,\Cat]$ with counit the same $q:Q \to 1$ as before; in particular each $QW$ admits a (co-free) coalgebra structure, and so is certainly a flexible weight.  Since the pseudocolimit $W \star_{p} D$ is the genuine colimit of $D$ weighted by a cofibrant replacement of $W$, pseudocolimits are closely related to homotopy colimits---this relationship was studied in \cite{Gambino2008Homotopy}.


\subsection{Saturation, pie colimits and splittings of idempotents}
Let $Flex$ denote the class of flexible weights so that $Flex(\f J) \subset [\f J, \Cat]$ consists of the \f J-indexed flexible weights.  It was shown in \cite{Bird1989Flexible} that the flexible weights form a saturated class\begin{footnote}{Saturated classes were originally called closed in \cite{Albert1988The-closure}.}\end{footnote} in the sense of \cite{Kelly2005Notes}.  This means that, for each \f J, $Flex(\f J)$ contains the representables and is closed in $[\f J,\Cat]$ under flexible colimits (not just \f J-indexed ones).  Moreover four kinds of colimit suffice to construct all flexible weights---namely co(p)roducts, co(i)nserters, co(e)quifiers and (s)plittings of idempotents, by which we mean that these four colimits are flexible and $Flex(\f J)=PIES^{*}(\f J)$, the closure of the representables in $[\f J,\Cat]$ under these four kinds of colimit.  

It follows that if one can construct $W$-colimits out of these four kinds in a general 2-category then $W$ is flexible: one uses that $W=W \star Y$ for the Yoneda embedding $Y$ and carries out the corresponding construction in $[\f J,\Cat]$.  This fact provides a convenient way to test whether a particular weight is flexible---examples which are easily seen to be flexible in this manner are coinverters, Kleisli objects of monads, the codescent objects of the following section along with numerous others---many such cases were described in \cite{Kelly1989Elementary} and \cite{Bird1989Flexible}.

Though not relevant in what follows it is perhaps worth mentioning that if we drop splittings of idempotents from the above and take the closure $PIE^{*}(\f J) \subset [\f J,\Cat]$ we get what are called the \emph{pie weights} \cite{Power1991A-characterization}.  Apart from splittings of idempotents essentially all flexible colimits, such as those just mentioned, are pie.  The pie weights can be recognised as precisely those admitting coalgebra structure for the comonad $Q$ on $[\f J,\Cat]$---see \cite{Lack2011Enhanced} or \cite{Bourke2011On-semiflexible}---this can be interpreted as saying that they are the \emph{algebraically cofibrant} objects in $[\f J,\Cat]$ in the sense of \cite{Riehl2011Algebraic}, a perspective which was further explored in \cite{Bourke2011On-semiflexible}.
\subsection{The importance of flexible limits and colimits in 2-category theory}

Let us briefly indicate some reasons for the interest in flexibility.  Primary objects of study in 2-dimensional universal algebra are 2-categories, such as the 2-category of monoidal categories and strong monoidal functors, whose morphisms only preserve structure up to isomorphism.  Such 2-categories generally admit pie limits \cite{Blackwell1989Two-dimensional}, and if the structure involved is itself ``flexible", such as monoidal structure, they also admit flexible limits \cite{Bird1989Flexible}; note that in the full sub 2-category containing the more ``rigid" strict monoidal categories idempotents need not split.  The precise distinction is that monoidal categories are the algebras for a \emph{flexible 2-monad} \cite{Kelly1974Doctrinal, Blackwell1989Two-dimensional} whereas strict monoidal categories are not.

\section{Sifted flexible colimits and a first decomposition}
A weight $W \in [\f J,\Cat]$ is said to be \emph{sifted} if finite products commute with $W$-colimits in \Cat.  This is to say that the 2-functor $W \star -:[\f J^{op},\Cat] \to \Cat$ preserves finite products.  In the present section we describe a number of weights which are both sifted and flexible and give our first decomposition result.  

The two key kinds of colimits are reflexive codescent and reflexive lax codescent objects---that the former are sifted is a result of Lack, Proposition 4.3 of \cite{Lack2002Codescent}, though the proof only outlined.  We fill in the details here and follow a suggestion of Lack to extend this result to the lax setting.  In both cases we follow the argument outlined in \cite{Lack2002Codescent}---to apply the following lemma\begin{footnote}{
In \cite{Lack2002Codescent} the hypothesis on the terminal object does not appear in the statement of the lemma but is discussed in the proof.}\end{footnote} of the same paper, which reduces the colimits to be computed to manageable special cases.
\begin{Lemma}\label{lem:Lack}\textnormal{(Lack)}
A weight $W:\f J \to \Cat$ is sifted if $W \star -: [\f J^{op},\Cat] \to \Cat$ preserves finite products of representables.  If \f J has a terminal object which is preserved by $W$ then $W$ is sifted so long as $W \star -$ preserves binary products of representables.
\end{Lemma}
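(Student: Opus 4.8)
The plan is to bootstrap preservation of \emph{all} finite products from preservation of finite products of \emph{representables}, exploiting the cocontinuity of $W\star-$ and of the binary product functors, together with the fact that every diagram is a colimit of representables. Here are the ambient facts I would record first. The 2-functor $W\star-:[\f J^{op},\Cat]\to\Cat$ has right adjoint $C\mapsto\Cat(W-,C)$ and hence preserves all (weighted) colimits. Since $\Cat$ is cartesian closed, the product bifunctor on $\Cat$ preserves colimits in each variable, and as products and colimits in $[\f J^{op},\Cat]$ are both computed pointwise the same is true there. Every $D:\f J^{op}\to\Cat$ is canonically a colimit of representables --- its density presentation in the sense of \cite{Kelly1982Basic}. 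Finally, a finite-product-preservation statement decomposes: a functor between categories with finite products preserves all of them as soon as it preserves binary products and the terminal object, the non-empty finite products then following by the usual induction.

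The main step is: \emph{if $W\star-$ preserves binary products of representables then it preserves all binary products.} Given $D_{1},D_{2}\in[\f J^{op},\Cat]$, present each as a colimit of representables, $D_{1}\cong\mathrm{colim}_{\alpha}R_{\alpha}$ and $D_{2}\cong\mathrm{colim}_{\beta}S_{\beta}$. Applying in succession the cocontinuity of $-\times D_{2}$ in $[\f J^{op},\Cat]$, of $W\star-$, of $R_{\alpha}\times-$ in $[\f J^{op},\Cat]$, of $W\star-$ again, then the hypothesis on representables, and finally the cocontinuity of $(W\star R_{\alpha})\times-$, of $W\star-$ and of $-\times(W\star D_{2})$ in $\Cat$, one identifies the canonical comparison
\[
W\star(D_{1}\times D_{2})\longrightarrow (W\star D_{1})\times(W\star D_{2})
\]
with the colimit over the pairs $(\alpha,\beta)$ of the canonical comparisons $W\star(R_{\alpha}\times S_{\beta})\to(W\star R_{\alpha})\times(W\star S_{\beta})$. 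Each of the latter is invertible by hypothesis, and a colimit of invertible maps is invertible, so the displayed map is an isomorphism. Iterating, $W\star-$ preserves every non-empty finite product.

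It remains to treat the terminal (empty) product, which yields both assertions. For the first statement the hypothesis that $W\star-$ preserves finite products of representables covers, for the empty product, the claim that $W\star-$ carries the terminal object of $[\f J^{op},\Cat]$ --- the constant weight at the terminal category --- to the terminal object of $\Cat$; combined with the previous paragraph, $W\star-$ preserves all finite products, that is, $W$ is sifted. For the second statement, let $\top$ be a terminal object of $\f J$ preserved by $W$. Then $\f J(j,\top)\cong 1$ for every $j$, so the representable $\f J(-,\top)$ is the terminal object of $[\f J^{op},\Cat]$; by the Yoneda isomorphism $W\star\f J(-,\top)\cong W\top$, which is terminal precisely because $W$ preserves $\top$. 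Hence $W\star-$ preserves the terminal object, and with the previous paragraph it preserves all finite products, so $W$ is sifted.

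The only point demanding genuine care is the identification carried out in the main step: one must verify that, threaded through the chain of cocontinuity isomorphisms, the single canonical comparison map really does become the colimit of the comparison maps for representables --- which amounts to checking compatibility of the various canonical cocones. This is routine bookkeeping; every other ingredient is formal.
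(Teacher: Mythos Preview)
The paper does not actually prove this lemma: it is stated with attribution to Lack and cited from \cite{Lack2002Codescent}, with only a footnote noting that the terminal-object clause is handled in Lack's proof rather than his statement. So there is no in-paper argument against which to compare.

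Your proof is correct and is precisely the standard argument one would expect (and, almost certainly, the one Lack gives): use the density of representables in $[\f J^{op},\Cat]$, the cocontinuity of $W\star-$, and the fact that binary products distribute over colimits in both $\Cat$ and $[\f J^{op},\Cat]$ to bootstrap from binary products of representables to arbitrary binary products; then treat the terminal object separately, either directly via the empty-product hypothesis or, in the second clause, via Yoneda applied to the representable at the terminal of $\f J$. Your handling of both clauses, and in particular your observation that the terminal of $[\f J^{op},\Cat]$ is the representable $\f J(-,\top)$ when $\f J$ has terminal $\top$, is exactly right. The caveat you flag---that the chain of isomorphisms must be checked to lie over the canonical comparison map---is the one genuine bookkeeping obligation, and it is indeed routine.
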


Let us remark upon our terminology concerning codescent objects---this is based upon \cite{Lack2002Codescent} and fits well with the appearance of codescent objects in 2-dimensional monad theory.  What we call lax codescent and codescent objects have also been called codescent and isocodescent objects respectively---see \cite{Street2004Categorical-and} for instance.  We will only consider the notion of a \emph{reflexive} lax codescent or \emph{reflexive} codescent object here which relate to the irreflexive kind \cite{Lack2002Codescent} as reflexive coequalisers do to coequalisers---a notable distinction is that only the reflexive variants commute with finite products in \Cat and \Set respectively.

\subsection{Reflexive lax codescent objects}
Truncating the simplicial category $\Delta$ at the ordered set with three elements gives a full subcategory $\dtwo \subset \Delta$; now restricting the usual embedding $\Delta \to \Cat$ along the inclusion yields the weight $W_{l}:\dtwo \to \Cat$ for reflexive lax codescent objects.  A diagram $\dtwo^{op} \to \f C$ in a 2-category \f C consists of a truncated simplicial object as on the left below
$$\xy
(-10,0)*+{(1)}="x0";
(0,0)*+{A_{2}}="c0"; (20,0)*+{A_{1}}="b0";(40,0)*+{A_{0}}="a0";
{\ar@<1.5ex>^{d} "b0"; "a0"}; 
{\ar@<0ex>|{i} "a0"; "b0"}; 
{\ar@<-1.5ex>_{c} "b0"; "a0"}; 
{\ar@<3ex>^{p} "c0"; "b0"}; 
{\ar@<0ex>|{m} "c0"; "b0"}; 
{\ar@<-3ex>_{q} "c0"; "b0"};
{\ar@<1.5ex>|{r} "b0"; "c0"};
{\ar@<-1.5ex>|{l} "b0"; "c0"};
\endxy
\hspace{1cm}
\xy
(15,0)*+{A_{1}}="b0"; (25,8)*+{A_{0}}="c0"; (35,0)*+{A}="d0"; (25,-8)*+{A_{0}}="e0";
{\ar ^{d} "b0";"c0"};
{\ar _{c} "b0";"e0"};
{\ar ^{f}"c0";"d0"};
{\ar_{f} "e0";"d0"};
{\ar@{=>}^{\eta}(25,3)*+{};(25,-3)*+{}};
\endxy$$
In elementary terms its reflexive lax codescent object $A$ is specified by a triple ($A$, $f:A_{0} \to A$, $\eta:fd \Rightarrow fc)$ as above satisfying the two equations for a \emph{lax codescent cocone}.  The first of these asserts the equality
$$\xy
(0,0)*+{A_{2}}="a0";(15,0)*+{A_{1}}="b0"; (25,12)*+{A_{0}}="c0"; (35,0)*+{A}="d0"; (25,-12)*+{A_{0}}="e0"; (10,12)*+{A_{1}}="f0";  (10,-12)*+{A_{1}}="g0";
{\ar ^{p} "a0";"f0"};
{\ar _{q} "a0";"g0"};
{\ar ^{d} "f0";"c0"};
{\ar _{c} "g0";"e0"};
{\ar ^{d} "b0";"c0"};
{\ar _{c} "b0";"e0"};
{\ar ^{f}"c0";"d0"};
{\ar_{f} "e0";"d0"};
{\ar^{m} "a0" ;"b0"};
{\ar@{=>}^{\eta}(25,3)*+{};(25,-3)*+{}};
\endxy
\hspace{0.5cm}
\xy
(0,0)*+{=};
\endxy
\hspace{0.5cm}
\xy
(0,0)*+{A_{2}}="a0";(20,0)*+{A_{0}}="b0"; (25,12)*+{A_{0}}="c0"; (35,0)*+{A}="d0"; (25,-12)*+{A_{0}}="e0"; (10,12)*+{A_{1}}="f0";  (10,-12)*+{A_{1}}="g0";
{\ar ^{p} "a0";"f0"};
{\ar _{q} "a0";"g0"};
{\ar ^{d} "f0";"c0"};
{\ar _{c} "g0";"e0"};
{\ar _{c} "f0";"b0"};
{\ar ^{d} "g0";"b0"};
{\ar |{f} "b0";"d0"};
{\ar ^{f}"c0";"d0"};
{\ar_{f} "e0";"d0"};
{\ar@{=>}^{\eta}(24,8)*+{};(24,3)*+{}};
{\ar@{=>}^{\eta}(24,-3)*+{};(24,-8)*+{}};
\endxy$$
whilst the second equation asserts that $\eta i: f=fdi \Rightarrow fci=f$ is an identity 2-cell.  As with all 2-categorical colimits it has both a 1 and 2-dimensional aspect to its universal property; the 1-dimensional aspect asserts that given any other such cocone $(B,g,\theta)$ there exists a unique arrow $g^{\prime}:A \to B$ such that $g^{\prime}f=g$ and $g^{\prime}\eta = \theta$; the 2-dimensional aspect asserts that given a second such triple $(B,h,\phi)$ together with a 2-cell $\rho:g \Rightarrow h$ rendering the square
$$
\xy
(0,0)*+{gd}="a0"; (15,0)*+{gc}="b0";(0,-10)*+{hd}="c0";(15,-10)*+{hc}="d0";
{\ar@{=>}^{\theta} "a0"; "b0"}; 
{\ar@{=>}_{\rho d} "a0"; "c0"}; 
{\ar@{=>}^{\rho c} "b0"; "d0"}; 
{\ar@{=>}_{\phi} "c0"; "d0"}; 
\endxy$$
commutative, then there exists a unique 2-cell $\rho^{\prime}:g^{\prime} \Rightarrow h^{\prime}$ between the induced factorisations such that $\rho^{\prime} \circ f = \rho$.

The most important example of such colimits for our concerns is the following example from \cite{Street2004Categorical-and}.  A small category $A$ can be presented as an internal category in \Set by taking its truncated nerve.  The reader can interpret diagram (1) above in this manner, so that $A_{0}, A_{1}$ and $A_{2}$ are the sets of objects, arrows and composable pairs of $A$, with the maps $d$ and $c$ the domain and codomain projections, and so on.  Viewing each of these sets as discrete categories one can view this internal category in \Set as an internal category in \Cat whose reflexive lax codescent object is exactly $A$.  The universal cocone $(A,f,\eta)$ has $f:A_{0} \to A$ the identity on objects inclusion, and $\eta:fd \Rightarrow fc$ the natural transformation whose component $\eta_{\alpha}$ at an object $\alpha:x \to y \in A_{1}$ is simply  $\alpha:x = fd\alpha \to fc \alpha = y$ itself, now viewed as an arrow of $A$.  We leave to the reader the worthwhile exercise of checking that this is indeed the claimed colimit.

In order to see that reflexive lax codescent objects are sifted colimits in \Cat it will be worth being precise about the manner in which we passed from the category $A$ to the corresponding internal category in \Cat; this was achieved by taking the singular \emph{functor} $\Cat \to [\dtwo^{op},\Set]$ associated to the embedding $\dtwo \to \Cat$ and postcomposing by $D_{*}:[\dtwo^{op},\Set] \to [\dtwo^{op},\Cat]$ where $D:\Set \to \Cat$ is the functor viewing each set as a discrete category; let us write $N_{d}:\Cat \to [\dtwo^{op},\Cat]$ for the composite functor (note that this is \emph{not} a 2-functor).  As a composite of limit preserving functors we observe that \emph{$N_{d}$ preserves limits}.

Consider the 2-functor $W_{l} \star - :[\dtwo^{op},\Cat] \to \Cat$ which takes reflexive lax codescent objects and write $(W_{l} \star -)_{0}: [\dtwo^{op},\Cat] \to \Cat$ for its underlying functor.  The content of our example is that we have an isomorphism $W_{l} \star N_{d}(A) \cong A$ for each category $A$; moreover this is easily seen to be a natural isomorphism $(W_{l} \star -)_{0} \circ N_{d} \cong 1$.

A diagram $X \in [\dtwo^{op},\Cat]$ lies in the essential image of $N_{d}$ just when it is an internal category in \Cat with each $X(j)$ discrete; let us call such diagrams in \Cat \emph{pointwise discrete categories}.  The relevance of this notion is that the representables $\dtwo(-,i):\dtwo^{op} \to \Cat$, corresponding to our weight $W_{l}:\dtwo \to \Cat$, \emph{all share this form}.  To see this let  $i \in \{0,1,2\}$ and consider the following composite
$$
\xy
(0,0)*+{\dtwo^{op}}="a0"; (20,0)*+{\Delta^{op}}="b0";(40,0)*+{\Set}="c0";(60,0)*+{\Cat}="d0";
{\ar^{j} "a0"; "b0"}; 
{\ar^{\Delta(-,i)} "b0"; "c0"}; 
{\ar^{D} "c0"; "d0"}; 
\endxy$$
In the middle the representable $\Delta(-,i)$ is well known to be the nerve of a category, so that its restriction $\Delta(-,i) \circ j$ is one too.  As $\dtwo$ is a full subcategory of $\Delta$ so the restriction $\Delta(-,i) \circ j$ is just $\dtwo(-,i)$ which is hence an internal category.  But we are supposed to be considering $\dtwo$ as a 2-category and its \Cat-valued representables; however since $\dtwo$ is locally discrete the corresponding \Cat-valued representable is just the composite $D \circ \dtwo(-,i)$, which, as $D$ preserves pullbacks, is a pointwise discrete category in \Cat.  We can now prove:

\begin{Proposition}\label{prop:cod}
Reflexive lax codescent objects are sifted flexible colimits.
\end{Proposition}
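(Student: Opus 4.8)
The plan is to handle flexibility and siftedness separately, reducing each to facts already in place. \emph{Flexibility.} By the remarks in Section~2 it suffices to build the reflexive lax codescent colimit, in an arbitrary $2$-category, out of co(p)roducts, co(i)nserters, co(e)quifiers and (s)plittings of idempotents; in fact a coinserter and two coequifiers suffice, so $W_{l}$ is even a pie weight. Given a diagram $D:\dtwo^{op}\to\f C$ as in $(1)$, one first forms the coinserter $(B,\,f:A_{0}\to B,\,\eta:fd\Rightarrow fc)$ of the pair $d,c:A_{1}\rightrightarrows A_{0}$; one then coequifies the two parallel $2$-cells expressing the first lax codescent cocone axiom, both obtained by pasting with $\eta$; and finally one coequifies the $2$-cell $\eta i:f\Rightarrow f$ against the identity on $f$, using $di=ci=1_{A_{0}}$. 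A routine check of the $1$- and $2$-dimensional universal properties identifies the result with $W_{l}\star D$. Since coinserters and coequifiers are pie colimits and $W_{l}=W_{l}\star Y$ for the Yoneda embedding $Y$, this places $W_{l}$ in $PIE^{*}(\dtwo)$, so $W_{l}$ is flexible; alternatively one may just invoke the flexibility of codescent-type weights from \cite{Kelly1989Elementary,Bird1989Flexible}.

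\emph{Siftedness.} Here I would apply Lemma~\ref{lem:Lack}. The ordinal $[0]$ is terminal in $\dtwo$, since every $[n]$ has a unique order-preserving map to $[0]$ and $\dtwo$ is full in $\Delta$, and it is preserved by $W_{l}$ because $W_{l}[0]$ is the terminal category. By the second clause of Lemma~\ref{lem:Lack} it therefore suffices to show that $W_{l}\star-$ preserves binary products of the representables $\dtwo(-,i):\dtwo^{op}\to\Cat$ for $i\in\{0,1,2\}$.

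This is what the discussion preceding the statement is for. Each such representable is a pointwise discrete category; more precisely $\dtwo(-,i)\cong N_{d}(W_{l}i)$, while $N_{d}$ preserves limits and $(W_{l}\star-)_{0}\circ N_{d}\cong 1$. Hence, for $i,j\in\{0,1,2\}$,
$$\dtwo(-,i)\times\dtwo(-,j)\;\cong\;N_{d}(W_{l}i)\times N_{d}(W_{l}j)\;\cong\;N_{d}(W_{l}i\times W_{l}j),$$
and applying $W_{l}\star-$ together with $(W_{l}\star-)_{0}\circ N_{d}\cong 1$ gives $W_{l}\star(\dtwo(-,i)\times\dtwo(-,j))\cong W_{l}i\times W_{l}j$. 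Combining the same two facts once more, $W_{l}\star\dtwo(-,i)\cong W_{l}i$, so the target of the canonical comparison map is $W_{l}i\times W_{l}j$ as well. Finally one checks that the comparison map is the isomorphism just obtained: $N_{d}$ preserves products, so it sends the two projections of $\dtwo(-,i)\times\dtwo(-,j)$ to those of $N_{d}(W_{l}i\times W_{l}j)$, and $(W_{l}\star-)_{0}\circ N_{d}\cong 1$ is natural, whence the comparison map corresponds to the identity on $W_{l}i\times W_{l}j$ and is invertible. By Lemma~\ref{lem:Lack}, $W_{l}$ is sifted.

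The step I expect to be the main obstacle is this last identification: one has two natural descriptions of $W_{l}\star(\dtwo(-,i)\times\dtwo(-,j))$ and must carry enough naturality along to see that the canonical comparison map is exactly the isomorphism between them, not merely one agreeing with it up to an automorphism. Everything else is bookkeeping with the three facts --- $\dtwo(-,i)\cong N_{d}(W_{l}i)$, that $N_{d}$ preserves limits, and $(W_{l}\star-)_{0}\circ N_{d}\cong 1$ --- assembled in the paragraphs before the statement.
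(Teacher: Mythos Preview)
Your proposal is correct and follows essentially the same approach as the paper: flexibility via a coinserter followed by two coequifiers, and siftedness via Lemma~\ref{lem:Lack} together with the facts that the representables lie in the essential image of $N_{d}$, that $N_{d}$ preserves products, and that $(W_{l}\star-)_{0}\circ N_{d}\cong 1$. The paper dispatches your ``main obstacle'' in one line by noting that any functor naturally isomorphic to the identity preserves binary products, so the comparison map is automatically invertible; your more explicit naturality check amounts to the same argument unwound.
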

\begin{proof}
Reflexive lax codescent objects can be constructed by forming a coinserter followed by two coequifiers and are consequently flexible colimits (see 2.4)---this construction is described in Proposition 2.1 of \cite{Lack2002Codescent} for a more general kind of lax codescent object. 

With regards siftedness observe that \dtwo has a terminal object preserved by its inclusion to \Cat so that it suffices, by Lemma~\ref{lem:Lack}, to show that $W_{l} \star - :[\dtwo^{op},\Cat] \to \Cat$ preserves binary products of representables.  Each representable is a pointwise discrete category---we will show $W_{l} \star -$ preserves binary products of these instead.  To show $W_{l} \star -$ preserves the product $X \times Y$ of such a pair it suffices to show that its underlying functor $(W_{l} \star -)_{0}$ does so.  But now $X$ and $Y$ lie in the essential image of $N_{d}$ so that, as \Cat has products and $N_{d}$ preserves them, the product also lies in the essential image.  Consequently we need only show that the composite $(W_{l} \star -)_{0} \circ N_{d}$ preserves binary products.  Being naturally isomorphic to the identity functor this is the case.
\end{proof}

\subsection{Reflexive codescent objects}
The weight $W_{i}:\dtwo \to \Cat$ for reflexive codescent objects is obtained from the weight $W_{l}:\dtwo \to \Cat$ for reflexive lax codescent objects by postcomposing $W_{l}$ by the reflection $\Cat \to \Gpd$ to groupoids, and then passing back via the inclusion $\Gpd \to \Cat$.  In elementary terms, given a diagram as in (1), its reflexive codescent object is specified by a triple $(A,f,\eta)$ satisfying the same equations as in the lax case, with the exception that $\eta$ is now required to be an invertible 2-cell; moreover the 1-dimensional universal property of $A$ only quantifies over triples $(B,g,\theta)$ in which $\theta$ is invertible, whilst the 2-dimensional universal property is the same as before.

The relevant example concerns the construction of the pseudomorphism classifier $QX$ of a diagram $X:\f J \to \f C$ in a complete and cocomplete 2-category \f C as arises from the adjunction $(\iota:[\f J,\f C] \leftrightarrows Ps(\f J,\f C):Q)$ discussed in 2.2.  To explain how this goes observe that restriction $U:[\f J, \f C] \to [ob \f J, \f C]$ along the inclusion of the discrete 2-category with the same objects as \f C has a left 2-adjoint $F$ and that $U$ is moreover monadic.  The adjunction $(\epsilon,F \dashv U,\eta)$ gives $FU$ the structure of a comonad on $[\f J, \f C]$ and so, in the usual way, yields for each $X \in [\f J,\f C]$ a (truncated) simplicial object
\vspace{0.3cm}
$$
\xy
(0,0)*+{(FU)^{3}X}="c0"; (30,0)*+{(FU)^{2}X}="b0";(60,0)*+{FUX}="a0";
{\ar@<1.5ex>^{\epsilon_{FUX}} "b0"; "a0"}; 
{\ar@<0ex>|{F\eta_{UX}} "a0"; "b0"}; 
{\ar@<-1.5ex>_{FU\epsilon_{X}} "b0"; "a0"}; 
{\ar@<3ex>^{} "c0"; "b0"}; 
{\ar@<0ex>|{} "c0"; "b0"}; 
{\ar@<-3ex>_{} "c0"; "b0"};
{\ar@<1.5ex>|{} "b0"; "c0"};
{\ar@<-1.5ex>|{} "b0"; "c0"};
\endxy
$$
\vspace{0.1cm}

 where we have omitted to label the higher face and degeneracy maps.  The reflexive codescent object of this diagram in $[\f J, \f C]$ is exactly $QX$.  

That this is the case is best understood in terms of two dimensional monad theory: the monadic adjunction $F \dashv U$ induces a 2-monad $T=UF$ whose 2-category of strict algebras and strict morphisms \TAlgs is $[\f J, \f C]$ whilst the $T$-pseudomorphisms, as belonging to the 2-category \TAlg, are precisely the pseudonatural transformations of $Ps(\f J, \f C)$---this is shown in Section 6.6 of \cite{Blackwell1989Two-dimensional}.  It follows that the inclusion $[\f J, \f C] \to Ps(\f J, \f C)$ coincides with the inclusion $\TAlgs \to \TAlg$ that views each strict $T$-algebra morphism as a pseudomorphism.  The formula for $QD$ above is then a special case of the formula for the pseudomorphism classifier $QA$ of a $T$-algebra $A$ as a reflexive codescent object of free algebras.  This formula was first described in \cite{Lack2002Codescent} in a more general setting relevant to pseudoalgebras---a description better suited to the present level of generality is given in  Section 4.2 of \cite{Lack2010A-2-categories}.  The most important case for us is when $\f C=\Cat$---at a weight $W$ the above presentation exhibits $QW$ as a reflexive codescent object of \emph{free} weights.

In understanding that reflexive codescent objects are sifted colimits in \Cat it will be useful to break down their construction into two steps: as a reflexive lax codescent object followed by a coinverter.  It is not worth the effort to describe the weight for coinverters here---see \cite{Kelly1989Elementary}---it suffices to say that the domain 2-category consists of a single 2-cell, so that, correspondingly, one forms the coinverter of a 2-cell $\alpha \in \f C(A,B)(f,g)$ in a 2-category \f C; this consisting of a pair $(C,h)$ as on the left below
$$\xy
(0,0)*+{A}="b0"; (15,0)*+{B}="c0"; (30,0)*+{C}="d0";
{\ar@/^1.3pc/^{f} "b0";"c0"};
{\ar@/_1.3pc/_{g}"b0";"c0"};
{\ar@{=>}^{\alpha}(7,3)*+{};(7,-3)*+{}};
{\ar^{h} "c0";"d0"};
\endxy
\hspace{2cm}
\xy
(15,0)*+{A_{1}}="b0"; (25,8)*+{A_{0}}="c0"; (35,0)*+{A}="d0"; (25,-8)*+{A_{0}}="e0";(50,0)*+{B}="f0"; 
{\ar ^{d} "b0";"c0"};
{\ar _{c} "b0";"e0"};
{\ar ^{f}"c0";"d0"};
{\ar_{f} "e0";"d0"};
{\ar@{=>}^{\eta}(25,3)*+{};(25,-3)*+{}};
{\ar_{f} "e0";"d0"};
{\ar^{g} "d0";"f0"};
\endxy
$$
in which $h \alpha$ is invertible; the 1-dimensional universal property is that given any $k:B \to D$ with $k\alpha$ invertible there exists a unique $k^{\prime}:C \to D$ such that $k^{\prime}h=k$; its 2-dimensional universal property asserts that, for each object $D$, the induced functor $\f C(h,D):\f C(C,D) \to \f C(B,D)$ is fully faithful.

Now it is easily seen that given a diagram as in (1) we can form its reflexive codescent object in two steps---firstly forming the reflexive lax codescent object $(A,f,\eta)$ and then the coinverter $(B,g)$ of the resulting 2-cell $\eta$, as drawn on the right above.  In the following proposition we shall use this construction to show that reflexive codescent objects are sifted colimits in \Cat; however it is not the case that coinverters are themselves sifted, but only \emph{reflexive} coinverters---this was shown using a $3 \times 3$ argument in \cite{Kelly1993Coinverters}.  The distinction between a reflexive coinverter and a coinverter is in the input; a coinverter is said to be reflexive when the input 2-cell $\alpha$, as above, admits a \emph{splitting}, in the sense of an arrow $k:B \to A$ such that $fk=1,gk=1$ and $\alpha k=1$.  

We need one further auxiliary concept---that of a \emph{liberal} arrow.  An arrow $f:A \to B$ of a 2-category \f C is said to be liberal if it is conservative in $\f C^{op}$---this means that a 2-cell $\alpha:g \Rightarrow h \in \f C(B,C)$ out of $B$ is invertible whenever the composite $\alpha f$ is.  We will have use for the fact that \emph{each bijective on objects functor is liberal in} \Cat.  Finally observe that given a diagram
$$\xy
(-15,0)*+{D}="a0";
(0,0)*+{A}="b0"; (15,0)*+{B}="c0"; (30,0)*+{C}="d0";
{\ar@/^1.3pc/^{f} "b0";"c0"};
{\ar@/_1.3pc/_{g}"b0";"c0"};
{\ar@{=>}^{\alpha}(7,3)*+{};(7,-3)*+{}};
{\ar^{h} "c0";"d0"};
{\ar^{e}"a0";"b0"};
\endxy$$
with $e$ liberal then $h$ exhibits $C$ as the coinverter of $\alpha$ if and only if it exhibits $C$ as the coinverter of $\alpha e$; this follows from the fact that for any $r:B \to E$ the composite $r\alpha$ is invertible just when $r\alpha e$ is.  With this in place we can prove:

\begin{Proposition}\label{prop:isocod}\textnormal{(Lack)}
Reflexive codescent objects are sifted flexible colimits.
\end{Proposition}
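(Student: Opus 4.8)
The plan is to dispose of flexibility in a line and devote the rest to siftedness, exploiting the two-step presentation, described just above, of a reflexive codescent object as a reflexive lax codescent object followed by a coinverter. Flexibility is immediate: that presentation builds a reflexive codescent object out of a coinserter, two coequifiers and a coinverter, each of which is flexible (see 2.4), so a reflexive codescent object is a flexible colimit.

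For siftedness I would first invoke Lemma~\ref{lem:Lack}. The 2-category $\dtwo$ has a terminal object which is preserved by $W_l$, and the reflection $\Cat \to \Gpd$ preserves terminal objects, so $W_i$ preserves the terminal object of $\dtwo$ too; it therefore suffices to show that $W_i \star - \colon [\dtwo^{op},\Cat] \to \Cat$ preserves binary products of representables. Each representable $\dtwo(-,i)$ is a pointwise discrete category, and, $N_d$ preserving products, the pointwise discrete categories — being the essential image of $N_d$ — are closed under binary products; it is thus enough to produce an isomorphism $W_i \star N_d(A \times B) \cong (W_i \star N_d(A)) \times (W_i \star N_d(B))$ for arbitrary categories $A$ and $B$. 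Using $(W_l \star -)_0 \circ N_d \cong 1$ together with the description of the universal cocone in the example recalled above, $W_l \star N_d(A) \cong A$ with cocone $(A, f_A, \eta_A)$ in which $f_A \colon A_0 \to A$ is the identity on objects inclusion of the (discrete) category $A_0$ of objects of $A$ and the component of $\eta_A \colon f_A d \Rightarrow f_A c \colon A_1 \to A$ at an arrow $\alpha \in A_1$ is $\alpha$ itself; hence $W_i \star N_d(A) = \mathrm{coinv}(\eta_A)$ by the two-step construction, and likewise for $B$ and for $A \times B$.

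The key observation is that, although $\mathrm{coinv}(\eta_A)$ is not presented as a reflexive coinverter, it coincides with one. Writing $A^{\atwo}$ for the category of arrows of $A$ and $\kappa_A \colon \mathrm{dom} \Rightarrow \mathrm{cod} \colon A^{\atwo} \to A$ for the canonical 2-cell, the transpose $\overline{\eta}_A \colon A_1 \to A^{\atwo}$ of $\eta_A$ is, since $A_1$ is discrete, nothing but the bijective on objects, hence liberal, inclusion of the discrete category on the objects of $A^{\atwo}$, and $\eta_A = \kappa_A \cdot \overline{\eta}_A$. By the liberal-arrow fact recalled just above, $\mathrm{coinv}(\eta_A)$ and $\mathrm{coinv}(\kappa_A)$ coincide; moreover $\kappa_A$ admits the splitting $k_A \colon A \to A^{\atwo}$, $x \mapsto 1_x$, so that $\mathrm{coinv}(\kappa_A)$ is a \emph{reflexive} coinverter. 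The same goes for $A \times B$, and under the isomorphism $(A \times B)^{\atwo} \cong A^{\atwo} \times B^{\atwo}$ the pair $(\kappa_{A \times B}, k_{A \times B})$ is carried to the pointwise product of $(\kappa_A, k_A)$ and $(\kappa_B, k_B)$ as diagrams for the weight for reflexive coinverters.

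Assembling the pieces, and remembering that $N_d(A) \times N_d(B) = N_d(A \times B)$, one obtains
\begin{align*}
W_i \star N_d(A \times B) &\cong \mathrm{coinv}(\kappa_{A \times B}) \cong \mathrm{coinv}(\kappa_A \times \kappa_B)\\
&\cong \mathrm{coinv}(\kappa_A) \times \mathrm{coinv}(\kappa_B) \cong (W_i \star N_d(A)) \times (W_i \star N_d(B)),
\end{align*}
the central isomorphism being exactly an instance of the fact, proved by a $3 \times 3$ argument in \cite{Kelly1993Coinverters}, that reflexive coinverters are sifted. I expect the main obstacle to be the key observation itself: recognising that the troublesome non-reflexive coinverter of $\eta_A$ can be traded, via the transpose into $A^{\atwo}$ and the liberal-arrow trick, for the manifestly reflexive coinverter of $\kappa_A$, and verifying that this identification is natural enough to be compatible with binary products, so that the siftedness of reflexive coinverters may be brought to bear.
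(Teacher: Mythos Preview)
Your proposal is correct and follows essentially the same route as the paper: both arguments reduce via Lemma~\ref{lem:Lack} to binary products of pointwise discrete categories, use the two-step construction of the reflexive codescent object as a reflexive lax codescent object followed by a coinverter, replace the non-reflexive coinverter of $\eta_A$ by the reflexive coinverter of the universal 2-cell $A^{\atwo}\to A$ via the liberal (bijective-on-objects) comparison $A_1\to A^{\atwo}$, and then invoke the siftedness of reflexive coinverters from \cite{Kelly1993Coinverters}. The only cosmetic difference is that the paper phrases the final step as the composite functor $A\mapsto (A^{\atwo},\lambda)\mapsto \mathrm{coinv}(\lambda)$ preserving binary products, whereas you spell out the product isomorphism explicitly.
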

\begin{proof}
That reflexive codescent objects are flexible follows from their construction via reflexive lax codescent objects and coinverters, both of which are flexible colimits.  That coinverters are flexible, constructible from coinserters and coequifiers, can be found in Proposition 4.2 of \cite{Kelly1989Elementary}.

With regards siftedness observe that the weights $W_{i}$ and $W_{l}$ have the same domain so that the associated representables coincide---these are pointwise discrete categories as in 3.1.  Again $W_{i}$ preserves the terminal object so that, as in the proof of Propostion~\ref{prop:cod}, $W_{i}$ will be sifted if we can show that the composite $(W_{i} \star -)_{0} \circ N_{d}:\Cat \to [\dtwo^{op},\Cat] \to \Cat$ preserves binary products.  We will show this to be true by breaking this functor down into several components.  Given $A \in \Cat$ the reflexive codescent object of $N_{d}A$ is obtained by forming the reflexive lax codescent object of $N_{d}A$---this is just $(A,f,\eta)$ as described in 3.1---followed by the coinverter $(B,g)$ of $\eta$ as on the left below
$$\xy
(15,0)*+{A_{1}}="b0"; (25,8)*+{A_{0}}="c0"; (35,0)*+{A}="d0"; (25,-8)*+{A_{0}}="e0";(50,0)*+{B}="f0"; 
{\ar ^{d} "b0";"c0"};
{\ar _{c} "b0";"e0"};
{\ar ^{f}"c0";"d0"};
{\ar_{f} "e0";"d0"};
{\ar@{=>}^{\eta}(25,3)*+{};(25,-3)*+{}};
{\ar_{f} "e0";"d0"};
{\ar^{g} "d0";"f0"};
\endxy
\hspace{1cm}
\xy
(5,0)*+{A_{1}}="b0"; (20,0)*+{A^{\atwo}}="e0"; (40,0)*+{A}="d0";(55,0)*+{B}="f0"; 
{\ar ^{t}"b0";"e0"};
{\ar@{=>}^{\lambda}(30,3)*+{};(30,-3)*+{}};
{\ar@/^1.5pc/^{p} "e0";"d0"};
{\ar@/_1.5pc/_{q}"e0";"d0"};
{\ar^{g} "d0";"f0"};
\endxy$$
Let us form the arrow category $A^{\atwo}$ of $A$ which comes equipped with an evident pair of projections and natural transformation $\lambda:p \Rightarrow q \in \Cat(A^{\atwo},A)$ with the universal property that any natural transformation into $A$ factors uniquely through it; we factorise $\eta$ as $\lambda t$ accordingly as indicated on the right above.  Explicitly $t$ is given by the map which assigns to an object of $A_{1}$, an arrow of $A$, the corresponding object of the arrow category $A^{\atwo}$; thus $t$ is bijective on objects.  As such it is liberal so that the coinverter $B$ of $\eta$ is equally the coinverter of $\lambda$.  Therefore $(W_{i} \star -)_{0} \circ N_{d}:\Cat \to [\dtwo^{op},\Cat] \to \Cat$ is equally just the functor which first assigns to a category $A$ the 2-cell $(A^{\atwo},\lambda:p \Rightarrow q,A)$ and then its coinverter; certainly the first assignment preserves products since arrow categories are limits (cotensors with the free arrow) in \Cat.  Furthermore the 2-cell $\lambda$ is reflexive, split by the functor $i:A \to A^{\atwo}$ that assigns to an object of $A$ the identity arrow upon it; since reflexive coinverters commute with finite products we deduce the claim.
\end{proof}

\subsection{Decomposition of a flexible weight as a sifted flexible colimit of coproducts of representables}
As well as using the above kinds of codescent object in the following decomposition, we will also split an idempotent.  Splittings of idempotents are of course absolute colimits and so sifted; and flexible as discussed in 2.4.  With these results in place we give our first decomposition.

\begin{Theorem}\label{thm:Decomp1}
Each flexible weight lies in the closure of the coproducts of representables under sifted flexible colimits.\end{Theorem}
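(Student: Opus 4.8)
The plan is to realise an arbitrary flexible weight $W$, starting from coproducts of representables, by applying in turn three of the sifted flexible colimits isolated above: a splitting of an idempotent, a reflexive codescent object, and a reflexive lax codescent object. The backbone is the presentation recalled in 3.2: for \emph{any} $W\in[\f J,\Cat]$ the pseudomorphism classifier $QW$ is the reflexive codescent object, formed in $[\f J,\Cat]$, of the truncated simplicial object built from the comonad $FU$ of the monadic restriction adjunction $F\dashv U:[\f J,\Cat]\to[ob \f J,\Cat]$, whose entries $FUW$, $(FU)^{2}W$, $(FU)^{3}W$ are \emph{free} weights. Since reflexive codescent objects are sifted flexible colimits by Proposition~\ref{prop:isocod}, it suffices to establish two things: (a) that each free weight $(FU)^{n}W$ lies in the closure of the coproducts of representables under sifted flexible colimits; and (b) that $W$ is recovered from $QW$ by a sifted flexible colimit.

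Point (b) is exactly what flexibility provides: $W$ flexible means that $q_{W}:QW\to W$ admits a section $s$ in $[\f J,\Cat]$, so that $sq_{W}:QW\to QW$ is idempotent and $(W,q_{W},s)$ is its splitting. Splittings of idempotents are absolute colimits, hence sifted, and are flexible (see 2.4), so $W$ is indeed a sifted flexible colimit of $QW$; granting (a), $W$ then lies in the required closure.

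For point (a) I would compute $F$ as the left Kan extension along $ob \f J\hookrightarrow\f J$. Since $ob \f J$ is discrete the defining coend collapses to a coproduct, giving $(FS)(k)=\coprod_{j}\f J(j,k)\times S(j)$ for $S\in[ob \f J,\Cat]$ --- a coproduct of copowers of representables. When each $S(j)$ is \emph{discrete} this reads $\coprod_{j}\coprod_{x\in S(j)}\f J(j,-)$, a genuine coproduct of representables. To reduce the general case to this one I would invoke the fact from 3.1 that every category $C$ is the reflexive lax codescent object $W_{l}\star N_{d}(C)$ of its pointwise discrete truncated nerve: writing each free weight $(FU)^{n}W$ as $FS$ for a suitable $S\in[ob \f J,\Cat]$ and applying this objectwise to $S$ presents $S$ as $W_{l}\star D$ for a diagram $D:\dtwo^{op}\to[ob \f J,\Cat]$ with pointwise discrete values, colimits in $[ob \f J,\Cat]$ being computed pointwise. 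As $F$ is a left $2$-adjoint it preserves this weighted colimit, so $FS\cong W_{l}\star(F\circ D)$, and each $(F\circ D)(i)$ is a coproduct of representables by the discrete case just noted. Hence every free weight $(FU)^{n}W$ is a reflexive lax codescent object of coproducts of representables, and so lies in the closure by Proposition~\ref{prop:cod}. Assembling (a) and (b) proves the theorem.

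The step I expect to require the most care is (a): the free weights appearing in the codescent presentation of $QW$ are \emph{not} themselves coproducts of representables --- only copowers thereof by arbitrary categories --- so one genuinely has to peel off this extra layer of reflexive lax codescent, and check both that $F$ transports that layer correctly (which uses that $F$ is a $\Cat$-enriched left adjoint, so cocontinuous for weighted colimits and not merely conical ones) and that the transported diagram really consists of coproducts, not copowers, of representables (which uses the discreteness of $ob \f J$). Everything else is an orchestration of Propositions~\ref{prop:cod} and~\ref{prop:isocod} with the standard presentation of $QW$.
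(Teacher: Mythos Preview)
Your proposal is correct and follows essentially the same route as the paper: split an idempotent to pass from $QW$ to $W$, present $QW$ as a reflexive codescent object of free weights, and decompose each free weight as a reflexive lax codescent object of coproducts of representables via the truncated nerve, using that $F$ is a left $2$-adjoint and that $ob\f J$ is discrete. The paper's argument is identical in structure and in the checks you flag as requiring care.
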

\begin{proof}
If $W$ is a flexible weight then $q_{W}:QW \to W$ has a section $w:W \to QW$ so that $W$ is the splitting of the idempotent $w \circ q_{W}$ on $QW$, a sifted flexible colimit.  Now recall from 3.2 the adjunction ($U:[\f J, \Cat] \leftrightarrows [ob \f J, \Cat]:F$) given by restriction and left Kan extension along the inclusion $ob \f J \to \f J$.  As described in 3.2 the weight $QW$ is a reflexive codescent object of free weights, those of the form $FX$ for a family $X:ob \f J \to \Cat$, and so a sifted flexible colimit of free weights by Proposition~\ref{prop:isocod}.  For $j \in \f J$ each category $Xj$ is, by 3.1, the reflexive lax codescent object of its truncated nerve $N_{d}(Xj)$ as (unlabelled) on the left below
\vspace{0.2cm}
$$\xy
(0,0)*+{X(j)_{2}}="c0"; (20,0)*+{X(j)_{1}}="b0";(40,0)*+{X(j)_{0}}="a0";
{\ar@<1.5ex>^{} "b0"; "a0"}; 
{\ar@<0ex>|{} "a0"; "b0"}; 
{\ar@<-1.5ex>_{} "b0"; "a0"}; 
{\ar@<3ex>^{} "c0"; "b0"}; 
{\ar@<0ex>|{} "c0"; "b0"}; 
{\ar@<-3ex>_{} "c0"; "b0"};
{\ar@<1.5ex>|{} "b0"; "c0"};
{\ar@<-1.5ex>|{} "b0"; "c0"};
\endxy
\hspace{2cm}
\xy
(0,0)*+{X_{2}}="c0"; (20,0)*+{X_{1}}="b0";(40,0)*+{X_{0}}="a0";
{\ar@<1.5ex>^{} "b0"; "a0"}; 
{\ar@<0ex>|{} "a0"; "b0"}; 
{\ar@<-1.5ex>_{} "b0"; "a0"}; 
{\ar@<3ex>^{} "c0"; "b0"}; 
{\ar@<0ex>|{} "c0"; "b0"}; 
{\ar@<-3ex>_{} "c0"; "b0"};
{\ar@<1.5ex>|{} "b0"; "c0"};
{\ar@<-1.5ex>|{} "b0"; "c0"};
\endxy
\vspace{0.2cm}$$
so that $X$ is the reflexive lax codescent object in $[ob \f J,\Cat]$ of the diagram on the right which pointwise evaluates to that on the left.  Applying the left adjoint $F:[ob \f J, \Cat] \to [\f J,\Cat]$ we deduce that $FX$ is a reflexive lax codescent object of the $FX_{i}$, in particular a sifted flexible colimit of the $FX_{i}$ by Proposition~\ref{prop:cod}.  

For each $i \in \{0,1,2\}$ the family $X_{i}:ob \f J \to \Cat$ takes its values amongst the discrete categories.  Any $Y:ob \f J \to \Cat$ with this property is the coproduct $Y= \Sigma_{j}Y(j).ob \f J(j,-)$ where, since $ob \f J$ is discrete, the representable $ob \f J(j,-)$ is just the characteristic function at $j$.  By the left Kan extension formula for $F$ it is easy to see that $F( ob \f J(j,-)) = \f J(j-)$, whence $FY=F (\Sigma_{j}Y(j).ob \f J(j,-)) = \Sigma_{j}Y(j).\f J(j,-)$ is a coproduct of representables; in particular for each $i \in \{0,1,2\}$ the weight $F(X_{i})$ is a coproduct of representables.  Thus $W$ can be formed in three steps by taking sifted flexible colimits of coproducts of representables.

\end{proof}

\section{Filtered colimits and the reduction to finite coproducts}
In this final section we extend the decomposition of Theorem~\ref{thm:Decomp1}, using filtered colimits to reduce from arbitrary coproducts of representables to finite coproducts.  Whilst filtered colimits are not flexible in general they do exhibit some good homotopical behaviour in \Cat which is crucial for our decomposition---namely, they are bicolimits in \Cat.
\subsection{Bicolimits}
Given a weight $W:\f J \to \Cat$ and diagram $D:\f J^{op} \to \f C$ the $W$-bicolimit $W \star_{b} D$ \cite{Kelly1989Elementary} is specified by a \emph{pseudococone} $W \rightsquigarrow \f C(D-,W \star_{b} D) \in Ps(\f J,\Cat)$ such that the induced functor $$\f C(W \star_{b} D, A) \to Ps(\f J,\Cat)(W, \f C(D-,A))$$ is an equivalence for each $A \in \f C$.  The bicolimit is only determined up to equivalence by its defining property; the pseudocolimit, if it exists, provides a canonical instance.   

We call a genuine colimit $W \star D$ a bicolimit if its colimiting cocone $W \to \f C(D-, W \star D)$ in fact exhibits $W \star D$ as the $W$-bicolimit.  This amounts to saying that the composite 
$$\f C(W \star D, A) \cong [\f J,\Cat](W, \f C(D-,A)) \to Ps(\f J,\Cat)(W, \f C(D-,A))$$
is an equivalence for each $A \in \f C$.  Since the first component is an isomorphism this is equally to say that the inclusion $$[\f J,\Cat](W, \f C(D-,A)) \to Ps(\f J,\Cat)(W, \f C(D-,A))$$ is an equivalence for each $A \in \f C$.

If $W$ is flexible then $q_{W}:QW \to W$ is an equivalence in $[\f J,\Cat]$ so that $q_{W}^{*}:[\f J,\Cat](W, \f C(D-,A)) \to [\f J,\Cat](QW, \f C(D-,A))$ is an equivalence for each $A$.  Composing this map with the canonical isomorphism $[\f J,\Cat](QW, \f C(D-,A))\cong Ps(\f J,\Cat)(W,\f C(D-,A))$ yields the above inclusion, so that \emph{any flexible colimit is a bicolimit}.

It is not true that filtered colimits are flexible nor that they are bicolimits in each 2-category.  They are, however, bicolimits in $\Cat$---this was shown in Lemma 5.4.9 of \cite{Makkai1989Accessible} by directly calculating filtered colimits in \Cat.  We give a different proof below, which follows easily from the equivalence of (1) and (3) in the following.

\begin{Proposition}\label{prop:bicolimits}
Let \f C be a complete and cocomplete 2-category consider a weight $W \in [\f J,\Cat]$.  The following are equivalent.
\begin{enumerate}
\item  $W$-colimits are bicolimits in \f C.
\item For each diagram $D$ the map $q_{W} \star D : QW \star D \to W \star D$ is an equivalence in \f C.
\item For each pointwise equivalence $f:D \to E$ of diagrams the induced $W \star D \to W \star E$ is an equivalence in \f C.
\end{enumerate}
\end{Proposition}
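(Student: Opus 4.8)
I would prove the three conditions equivalent by establishing $(1)\Leftrightarrow(2)$ and $(2)\Leftrightarrow(3)$, with $(2)$ as the hub, resting on two elementary facts. \emph{Fact A}: a $1$-cell $g$ of a $2$-category $\f C$ is an equivalence precisely when $\f C(g,A)$ is an equivalence of categories for every $A$ --- for the non-formal direction one tests at $A$ equal to the source and to the target of $g$ and uses fully faithfulness. \emph{Fact B}: a pointwise equivalence in $[\f K,\Cat]$ is an equivalence in $Ps(\f K,\Cat)$; one assembles a pseudo-inverse by choosing pseudo-inverses and constraint isomorphisms componentwise, the pseudonaturality squares being forced because precomposition with an equivalence of categories is fully faithful. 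Two consequences will recur. First, from Fact B and the isomorphism $\f C(QV\star D,A)\cong Ps(\f J,\Cat)(V,\f C(D-,A))$ one reads off that $QV\star-\colon[\f J^{op},\f C]\to\f C$ preserves pointwise equivalences, for every weight $V$. Secondly, a flexible weight $V$ (over any domain) has $q_V\colon QV\to V$ a surjective equivalence in $[\f K,\Cat]$, so the $2$-functor $-\star D$ carries it to an equivalence $q_V\star D$ in $\f C$; placing this in the evident naturality square against a pointwise equivalence of diagrams and invoking the first consequence together with two-out-of-three shows that $V\star-$ also preserves pointwise equivalences. I will use the latter for the flexible weight $W_i\colon\dtwo\to\Cat$ for reflexive codescent objects.

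For $(1)\Leftrightarrow(2)$: combining the colimiting isomorphisms with $[\f J,\Cat](QW,-)\cong Ps(\f J,\Cat)(W,-)$ and the triangle identity $q_W p_W=1$, the functor $\f C(q_W\star D,A)$ is identified with the inclusion $[\f J,\Cat](W,\f C(D-,A))\hookrightarrow Ps(\f J,\Cat)(W,\f C(D-,A))$. Now $(1)$ asserts that these inclusions are equivalences for all $D$ and $A$, while by Fact A condition $(2)$ asserts exactly the same thing. For $(2)\Rightarrow(3)$: a pointwise equivalence $f\colon D\to E$ gives, by naturality of the weighted colimit in the weight, a commuting square with corners $QW\star D$, $QW\star E$, $W\star D$, $W\star E$, whose two ``vertical'' edges are $q_W\star D$ and $q_W\star E$ --- equivalences by $(2)$ --- and whose top edge $QW\star f$ is an equivalence since $QW$ is flexible; hence $W\star f$ is an equivalence by two-out-of-three.

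The implication $(3)\Rightarrow(2)$ is the crux. The hypothesis concerns variation of the diagram whereas $q_W\star D$ concerns variation of the weight, and the bridge is to resolve the \emph{diagram}. Write $F'\dashv U'$ for restriction and left Kan extension along $ob\,\f J^{op}\hookrightarrow\f J^{op}$, and let $q'_D\colon Q'D\to D$ be the counit of the resulting pseudomorphism-classifier comonad on $[\f J^{op},\f C]$; this is a pointwise surjective equivalence, and --- by the diagram analogue of the description recalled in Section~3.2 --- $Q'D$ is the reflexive codescent object in $[\f J^{op},\f C]$ of the free resolution $(F'U')^{\bullet+1}D$. Naturality of the weighted colimit gives a commuting square with corners $QW\star Q'D$, $QW\star D$, $W\star Q'D$, $W\star D$; in it $W\star q'_D$ is an equivalence by $(3)$, $QW\star q'_D$ is an equivalence since $QW$ is flexible, and --- the key point --- $q_W\star Q'D$ is an equivalence because $-\star Q'D$ sends pointwise equivalences of weights to equivalences. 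For this last claim one uses the Fubini isomorphism $V\star Q'D\cong W_i\star_{\dtwo}\bigl(V\star(F'U')^{\bullet+1}D\bigr)$: each $V\star(F'U')^{n+1}D$ is a coproduct of tensors of objects of $\f C$ by the categories $V(k)$, hence is visibly homotopical in $V$, and $W_i\star_{\dtwo}-$ preserves levelwise equivalences by the first paragraph. Two-out-of-three then forces $q_W\star D$ to be an equivalence, which is $(2)$.

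The main obstacle I foresee is the verification surrounding this last square: that the pseudomorphism-classifier description of $Q'D$ transports correctly through the Fubini isomorphism, that $V\star F'Z\cong\coprod_k V(k)\cdot Z(k)$ (whence homotopy-invariance in the weight), and that the displayed square indeed commutes strictly. Once these bookkeeping points and Facts A and B are in place, the remaining implications are formal. One should also take care throughout that ``bicolimit'' in $(1)$ is used in the sense that the \emph{colimiting} cocone of $W\star D$ exhibits it as the bicolimit, matching the discussion preceding the statement.
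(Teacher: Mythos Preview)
Your argument is correct, and for $(1)\Leftrightarrow(2)$ and $(2)\Rightarrow(3)$ it matches the paper's treatment (the paper phrases the latter as $(1)\Rightarrow(3)$, working on the representable side, but via the identification you establish in $(1)\Leftrightarrow(2)$ it is the same square).

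The genuine difference is in $(3)\Rightarrow(2)$. The paper observes directly that there is an isomorphism $\lambda\colon QW\star D \cong W\star Q^{c}D$ (your $Q'D$) \emph{compatible with the counits}, so that $q_{W}\star D = (W\star q^{c}_{D})\circ\lambda$. This comes from the cotensor isomorphism $[\f J^{op},\f C](D,A^{W-})\cong[\f J,\Cat](W,\f C(D-,A))$, which is natural enough to pass to pseudonatural transformations on either side; applying it back and forth together with the defining properties of $QW$ and $Q^{c}D$ yields $\lambda$. Given this, $(3)$ makes $W\star q^{c}_{D}$ an equivalence and hence $q_{W}\star D$ is one too---a triangle rather than a square, and no Fubini or codescent resolution is needed. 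Your route instead builds a four-vertex square and must separately verify that $q_{W}\star Q'D$ is an equivalence, which you do by resolving $Q'D$ as a reflexive codescent object of free diagrams, applying Fubini, checking $V\star F'Z\cong\coprod_{k}V(k)\cdot Z(k)$ is homotopical in $V$, and invoking flexibility of $W_{i}$. All of these steps are sound, but the paper's single ``swap'' isomorphism replaces the whole chain. What your approach buys is that it avoids the cotensor calculus and makes explicit \emph{why} things work at the level of free resolutions; what the paper's buys is brevity and a cleaner conceptual picture (it is the colimit analogue of a result of Gambino on limits, as the paper notes).
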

\begin{proof}
To say that $q_{W} \star D:QW \star D \to W \star D$ is an equivalence is equally to say that the functor $\f C(q_{W} \star D,A) : \f C(W \star D,A) \to \f C(QW \star D,A)$ is an equivalence for each $A \in \f C$.  The canonical isomorphisms render this isomorphic to the inclusion $[\f J,\Cat](W, \f C(D-,A)) \to Ps(\f J,\Cat)(W, \f C(D-,A))$ so that (1) and (2) are equivalent.

That (1) implies (3) is also straightforward.  Assuming (1) consider a pointwise equivalence $f:D \to E$ of diagrams in $[\f J^{op},\f C]$.  This induces a pointwise equivalence $f^{*}:C(E-,A) \to C(D-,A) \in [\f J, \Cat]$.  Whiskering by $f^{*}$ induces a commutative square
$$\xy
(0,0)*+{[\f J,\Cat](W, \f C(E-,A))}="a0"; (50,0)*+{Ps(\f J,\Cat)(W, \f C(E-,A))}="b0";(0,-10)*+{[\f J,\Cat](W, \f C(D-,A))}="c0";(50,-10)*+{Ps(\f J,\Cat)(W, \f C(D-,A))}="d0";
{\ar^{} "a0"; "b0"}; 
{\ar_{} "a0"; "c0"}; 
{\ar^{} "b0"; "d0"}; 
{\ar_{} "c0"; "d0"}; 
\endxy$$
in which the horizontal arrows are the inclusions, both of which are equivalences since $W$-colimits are bicolimits in \f C.  The pointwise equivalence $f^{*}$ is a genuine equivalence in $Ps(\f J,\Cat)$ so that the right vertical arrow is an equivalence whence, by 2 out of 3, the left vertical arrow is an equivalence too.  Therefore the isomorphic $\f C(W \star f,A):\f C(W \star E, A) \to \f C(W \star D, A)$ is an equivalence for each $A$ so that $W \star f$ is itself an equivalence.

It remains to show that (3) implies (2).  Recall the adjunction $(\iota:[\f J, \f C] \leftrightarrows Ps(\f J, \f C):Q^{c})$ from 2.2 with counit $q^{c}:Q^{c} \to 1$.  The key point here is that we have an isomorphism $\lambda:QW \star D \cong W \star Q^{c}D$ compatible with the counits in the sense that the triangle
$$
\xy
(0,7)*+{QW \star D}="a"; (0,-7)*+{W \star Q^{c}D}="b"; (30,0)*+{W \star D}="c"; 
{\ar@<0ex>_{\lambda} "a"; "b"}; 
{\ar@<0ex>_{W \star q^{c}_{D}} "b"; "c"}; 
{\ar@<0ex>^{q_{W} \star D} "a"; "c"}; 
\endxy
$$
commutes.  This is just the colimit variant of a result on limits of \cite{Gambino2008Homotopy}.  To see where the isomorphism comes from recall that the cotensor $A^{X}$ (or power) of an object $A \in \f C$ by a category $X$ is the limit defined by a natural isomorphism $$ \f C(B,A^{X}) \cong \Cat(X, \f C(B,A))$$
Given a weight $W: \f J \to \Cat$ and diagram $D: \f J^{op} \to \f C$ this isomorphism extends in a pointwise manner to a natural isomorphism as on the left below $$[\f J^{op}, \f C](D,A^{W-}) \cong [\f J, \Cat](W, \f C(D-,A))$$  which, moreover, is compatible with pseudonatural transformations in the sense that $Ps(\f J^{op}, \f C)(D,A^{W-}) \cong Ps(\f J, \Cat)(W, \f C(D-,A))$.  Applying these isomorphisms back and forth, together with the universal properties of $QW$ and $Q^{c}D$, yields an isomorphism $[ \f J,\Cat](QW, \f C(D-,A)) \cong [\f J, \Cat](W, \f C(Q^{c}D-,A))$ and so the claimed $\lambda:QW \star D \cong W \star Q^{c}D$.  It is straightforward to check compatibility with the counit.

As discussed in 2.2 the map $q^{c}_{D}$ is always a pointwise equivalence so that, by assumption, $W \star q^{c}_{D}$ is an equivalence.  As $\lambda$ is an isomorphism it follows that $q_{W} \star D$ is an equivalence for each $D$ proving (2).
\end{proof}

\begin{Corollary}\label{cor:fcols}\textnormal{(Makkai-Par{\'{e}})}
Filtered colimits are bicolimits in \Cat.
\end{Corollary}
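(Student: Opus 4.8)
The plan is to deduce the corollary from the equivalence of conditions (1) and (3) in Proposition~\ref{prop:bicolimits}. A filtered colimit is a conical colimit, that is, the colimit weighted by the terminal weight $\Delta 1$; when formed in \Cat it is the ordinary colimit functor $\mathrm{colim}$. By Proposition~\ref{prop:bicolimits} it therefore suffices to prove that, for each filtered category $I$ and each objectwise equivalence $f \colon D \to E$ between functors $D,E \colon I \to \Cat$, the induced functor $\mathrm{colim}\, f \colon \mathrm{colim}\, D \to \mathrm{colim}\, E$ is an equivalence of categories.

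For this I would use the standard explicit description of filtered colimits in \Cat: the forgetful functor $\Cat \to \Gph$ creates them, so that the object set of $\mathrm{colim}\, D$ is the filtered colimit of the object sets of the $D(i)$, the morphism set is the filtered colimit of the morphism sets, and source, target, identities and composition are induced---the last being well defined and associative precisely because filtered colimits commute with finite limits in \Set. Granting this, essential surjectivity of $\mathrm{colim}\, f$ is immediate: an object of $\mathrm{colim}\, E$ is represented by some $y \in E(i)$; essential surjectivity of $f_i$ supplies an $x \in D(i)$ and an isomorphism $f_i(x) \cong y$ in $E(i)$, whose image in $\mathrm{colim}\, E$ exhibits $(\mathrm{colim}\, f)[x] \cong [y]$.

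It remains to check full faithfulness. Given objects $[x],[x']$ of $\mathrm{colim}\, D$, filteredness of $I$ lets us take $x,x'$ to be objects of a single $D(i)$; the hom-set $\mathrm{Hom}_{\mathrm{colim}\, D}([x],[x'])$ is then the fibre of the map $(s,t) \colon mor\,(\mathrm{colim}\, D) \to ob\,(\mathrm{colim}\, D) \times ob\,(\mathrm{colim}\, D)$ over $([x],[x'])$, and since filtered colimits commute with this finite limit in \Set it is computed as the filtered colimit, over the coslice $i/I$, of the hom-sets $\mathrm{Hom}_{D(k)}(x_k,x'_k)$, where $x_k,x'_k$ denote the images of $x,x'$ in $D(k)$; the same holds for $E$, and under these identifications the map induced by $\mathrm{colim}\, f$ is the filtered colimit of the comparison maps $\mathrm{Hom}_{D(k)}(x_k,x'_k) \to \mathrm{Hom}_{E(k)}(f_k x_k, f_k x'_k)$. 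Each of these is a bijection since $f_k$ is fully faithful, hence so is the colimit; thus $\mathrm{colim}\, f$ is fully faithful, and being essentially surjective it is an equivalence, so Proposition~\ref{prop:bicolimits} applies. I do not anticipate a genuine obstacle here: the entire content is the explicit construction of filtered colimits in \Cat, together with the commutation of filtered colimits with finite limits in \Set, and the only delicate point is the routine bookkeeping with representatives and the passage to the coslice $i/I$ when expressing hom-sets of the colimit as filtered colimits of hom-sets.
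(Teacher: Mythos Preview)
Your proof is correct and shares with the paper the reduction, via the equivalence of (1) and (3) in Proposition~\ref{prop:bicolimits}, to the statement that a filtered colimit of pointwise equivalences of diagrams in \Cat is again an equivalence. The difference lies only in how that last statement is established. The paper invokes a general model-categorical fact: \Cat is a finitely combinatorial model category, and in any such category filtered colimits of weak equivalences are weak equivalences (citing Dugger). You instead argue directly, using the explicit description of filtered colimits in \Cat (created by the forgetful functor to graphs) together with the commutation of filtered colimits with finite limits in \Set to compute hom-sets of the colimit as filtered colimits of hom-sets. Your route is more elementary and self-contained---essentially a streamlined version of the Makkai--Par\'e calculation the paper is offering an alternative to---while the paper's route is shorter and places the result in a broader context, at the cost of importing the machinery of combinatorial model categories.
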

\begin{proof}
As \Cat is a (finitely) combinatorial model category a filtered colimit of equivalences is again an equivalence \cite{Dugger2001Combinatorial}.  The equivalence of (1) and (3) in Proposition~\ref{prop:bicolimits} gives the result.
\end{proof}

\subsection{The final decomposition}

Our terminology below differs slightly from that used by Rosick{\'{y}} in 3.2 and 3.3 of \cite{Rosicky2012Rigidification}.  Given a suitable monoidal model category $\f V$, such as our \Cat or \Sset, and a weight $W \in [\f J, \f V]$ with cofibrant replacement $q_{c}:W_{c} \to W$ in the projective model structure, he calls $W$ \emph{homotopy invariant} if for each objectwise cofibrant diagram $D:\f J^{op} \to \f V$ the induced morphism $q_{c} \star D:W_{c} \star D \to W \star D$ is a weak equivalence in \f V.  In \Cat the cofibrant replacement of a weight $W$ is the map $q_{W}:QW \to W$ considered throughout whilst every object in \Cat is cofibrant; it now follows from Proposition~\ref{prop:bicolimits} that a weight $W:\f J \to \Cat$ is homotopy invariant in Rosick{\'{y}}'s sense just when $W$-colimits are bicolimits in \Cat.

For the final decomposition we need a notion of closure which quantifies over diagrams as well as weights.  Consider a full subcategory $:\f A \to \f B$ and a class $\Phi$ of pairs of weights and diagrams $\Phi=\{(W_{i}:\f J_{i} \to \Cat, D_{i}:\f J_{i}^{op} \to \f B); i \in I\}$ with each colimit $W_{i} \star D_{i}$ existing in \f B.  By the \emph{1-step closure} of \f A in \f B under colimits of type $\Phi$ we mean the full subcategory of \f B consisting of \f A together with each colimit of the form $W_{i} \star D_{i}$ for $(W_{i},D_{i}) \in \Phi$ \emph{where $D_{i}$ is a diagram taking values in \f A}.  In a similar manner the \emph{2-step closure} is obtained by adding $\Phi$-colimits \emph{with diagrams taking values in the 1-step closure}, and so on.\begin{footnote}{The limit of this process is the \emph{closure} discussed in 3.5 of \cite{Kelly1982Basic}.}\end{footnote}

\begin{Theorem}\label{thm:Decomp2}
Each flexible weight of $[\f J,\Cat]$ lies in the (4-step) closure of the finite coproducts of representables in $[\f J,\Cat]$ under colimits of type $\Phi$, where a pair $(W:\f K \to \Cat, D:\f K^{op} \to [\f J,\Cat])$ belongs to $\Phi$ just when
\begin{enumerate}
\item $W$ is a sifted weight.
\item $W$-colimits are bicolimits in \Cat.
\item Each diagram $D$ takes values amongst flexible weights and each colimit $W \star D$ is flexible.
\end{enumerate}
\end{Theorem}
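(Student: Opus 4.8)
The plan is to take the three-step decomposition already obtained in Theorem~\ref{thm:Decomp1} and prepend a single further step which replaces arbitrary coproducts of representables by filtered colimits of finite ones. First I would observe that each of the three kinds of colimit used in the proof of Theorem~\ref{thm:Decomp1} --- reflexive lax codescent objects, reflexive codescent objects, and splittings of idempotents --- is a colimit of type $\Phi$. Condition (1) holds since these are all sifted, by Propositions~\ref{prop:cod} and~\ref{prop:isocod} and Section 2.4. Condition (2) holds because each is a flexible colimit, and any flexible colimit is a bicolimit in \Cat as noted in Section 4.1. For condition (3) one checks, running through the proof of Theorem~\ref{thm:Decomp1}, that at each stage the diagram takes values amongst flexible weights --- coproducts of representables for the reflexive lax codescent step, the free weights $FX$ (themselves coproducts of representables, hence flexible) for the reflexive codescent step, and $QW$ for the splitting step --- while the colimit formed, namely $FX$, $QW$ and $W$ respectively, is in each case flexible. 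Thus Theorem~\ref{thm:Decomp1} already exhibits each flexible weight as lying in the three-step closure of the coproducts of representables under colimits of type $\Phi$.

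It then remains to express each coproduct of representables $\Sigma_{a}\f J(j_{a},-)$ as a colimit of type $\Phi$ whose diagram takes values amongst finite coproducts of representables. For this I would use the canonical presentation of a coproduct as the filtered colimit, over the poset $\f K$ of finite subsets of the index set directed by inclusion, of the finite sub-coproducts $F \mapsto \Sigma_{a \in F}\f J(j_{a},-)$; since coproducts in $[\f J,\Cat]$ are computed pointwise, as are filtered colimits in \Cat, this is a genuine conical colimit in $[\f J,\Cat]$. This filtered colimit is sifted, since filtered colimits commute with finite products in \Cat, giving condition (1); it satisfies condition (2) by Corollary~\ref{cor:fcols}; and it satisfies condition (3) because each finite coproduct of representables is flexible --- the flexible weights forming a saturated class containing the representables and closed under coproducts --- while the colimit $\Sigma_{a}\f J(j_{a},-)$ is flexible for the same reason. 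Prepending this step to the three steps above places each flexible weight in the four-step closure of the finite coproducts of representables under colimits of type $\Phi$.

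The only delicate point is the interplay of conditions (2) and (3) in the definition of $\Phi$. A filtered colimit is in general neither flexible nor a bicolimit, but the instances we need are bicolimits precisely because we work in \Cat, via Corollary~\ref{cor:fcols}; and although the colimit we form --- an arbitrary coproduct of representables --- need not itself be a \emph{finite} coproduct of representables, it remains a flexible weight, so condition (3) is still met. This separation of ``the weight $W$ is homotopically well behaved'' from ``the colimit $W \star D$ is flexible'' is what allows the filtered colimit step and the flexible-colimit steps inherited from Theorem~\ref{thm:Decomp1} to be amalgamated into a single notion of closure.
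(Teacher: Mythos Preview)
Your proof is correct and follows the same route as the paper: use filtered colimits to pass from finite to arbitrary coproducts of representables in one step, then apply the three steps from Theorem~\ref{thm:Decomp1}, checking that each stage yields a pair in $\Phi$. One minor slip: the free weights $(FU)^{n}W$ appearing in the reflexive codescent diagram are not literally coproducts of representables, since the families on which they are free need not be pointwise discrete --- they are flexible rather because free objects are cofibrant, or equivalently because each is a reflexive lax codescent object of coproducts of representables as established in the course of Theorem~\ref{thm:Decomp1}.
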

\begin{proof}
Arbitrary coproducts of representables are flexible by 2.4.  In any category, or 2-category, an arbitrary coproduct can be constructed using filtered colimits of finite coproducts---in particular each coproduct of representables can be constructed as a filtered colimit of finite coproducts of representables in $[\f J,\Cat]$.  Filtered colimits certainly commute with finite products in \Cat, and so are sifted, and are bicolimits by Corollary~\ref{cor:fcols}; thus arbitary coproducts lie in the 1-step closure.  

We saw in Theorem~\ref{thm:Decomp1} that any flexible weight can be constructed from coproducts of representables in three stages by taking sifted flexible colimits, always of flexible weights.  As discussed in 4.1 flexible colimits are bicolimits in $\Cat$---thus each flexible weight lies in the 4-step closure.
\end{proof}

\bibliographystyle{acm}
\bibliography{bibdata}
\end{document}